\newtheorem{theorem}{Theorem}
\theoremstyle{plain}
\newtheorem{lemma}[theorem]{Lemma}
\theoremstyle{remark}
\newtheorem{remark}[theorem]{Remark}
\numberwithin{equation}{section}
\DeclareMathOperator{\sign}{sign}
\DeclareMathOperator{\supp}{supp}
\def\R{\mathbb R}
\def\underset#1\to#2{\mathop{#2}\limits_{#1}{ }}
\def\overset#1\to#2{\mathop{#2}\limits^{#1}{ }}
\begin{document}
\title{\bf A note on type of weak-$L^1$ and weak-$\ell^1$ spaces}

\keywords{Quasi-Banach spaces, weak-$L^1$ spaces, weak-$\ell^1$ space, Marcinkiewicz spaces, type of quasi-Banach spaces}
\subjclass[2010]{46B20, 46E30, 47B38}

\author[Kami\'nska]{Anna Kami\'{n}ska}
\address[Kami\'nska]{Department of Mathematical Sciences,
The University of Memphis, TN 38152-3240, U.S.A.}
\email{kaminska@memphis.edu}

\date{}
\maketitle

\begin{abstract} We present a direct proof of the fact that the weak-$L^1$ and weak-$\ell^1$ spaces do not have type $1$. 
\end{abstract}

It has been known for some time that the weak -$L^1$ space is not normable, that is there does not exist a norm equivalent to the standard quasi-norm $\|f\|_{1,\infty}$ in the weak-$L^1$ space \cite{CRS}. In \cite[Proposition 2.3]{KalKam} it was proved more, namely that the weak-$L^1$ space does not have type $1$. This was obtained indirectly as a corollary of more general investigations. Here we present a direct proof by constructing suitable sequences of functions that contradicts type $1$ property in weak-$L^1$ or weak-$\ell^1$ spaces.

Let $r_n : [0,1]\rightarrow \Bbb R, n\in\Bbb N$, be Rademacher
functions, that is $r_n(t) = \sign \,(\sin 2^n\pi t)$. A quasi-Banach space
$(X, \|\cdot\|)$ has type $1$ \cite{Kal1, LT} if there is a constant $K>0$  such that,
for any choice of finitely many vectors $x_1,\dots ,x_n$ from $X$,
$$
\int_0^1 \Big\| \sum_{k=1}^n r_k(t)x_k \Big\|dt \le K \sum_{k=1}^n
\|x_k\|.
$$
Clearly if $X$ is a normable space then  $X$ has type $1$. For theory of quasi-Banach spaces see \cite{KPR}.

If $f$ is a~real-valued measurable function on $I$, where $I=(0,1)$ or $I=(0,\infty)$,   then
we define the \textit{distribution function} of $f$ by
$d_f(\lambda)=\left|\{x\in\R:\,|f(x)|>\lambda\}\right|$
for each $\lambda\ge 0$, where $|\cdot |$ denotes the Lebesgue measure on $\mathbb{R}$, and
the \textit{decreasing rearrangement} of $f$ is defined as
\begin{equation*}
f^*(t)=\inf\left\{s>0:d_f(s)\le t\right\},\quad
t\in I.
\end{equation*}
The {\it weak-$L^1$} space on $I$, called also the {\it Marcinkiewicz} space and denoted by $L_{1,\infty}(I)$ \cite{BS, KPS}, is the collection of all real valued  measurable functions on $I$ such that 
\[
\|f\|_{1,\infty}= \sup_{t\in I} t\,f^*(t)<\infty.
\]
The space  $L_{1,\infty}(I)$  equipped with the quasi-norm $\|\cdot\|_{1,\infty}$ is complete.

Analogously we define a sequence weak-$\ell^1$ space.
Given a bounded real-valued sequence $x=\{x(n)\}$,
consider the function $f(t) = \sum_{n=1}^\infty
x(n)\chi_{[n-1,n)}(t),\, t\ge 0,$ and define a decreasing rearrangement $x^* = \{x^*(n)\}$ of the sequence $x$ as follows
\[
x^*(n) = f^*(n-1),\ \ \ n\in \Bbb N.
\]
Then the weak-$\ell^1$ space denoted as $\ell_{1,\infty}$ consists of
all sequences $x=\{x(n)\}$ such that
\[
\|x\|_{1,\infty} = \sup_n n\,x^*(n)<\infty,
\]
and $\ell_{1,\infty}$ equipped with $\|\cdot\|_{1,\infty}$ is a quasi-Banach space.

\begin{lemma}\label{L:type1} For every $n\in \Bbb N$ there exists a sequence
$(g_{jn})_{j=1}^n\subset L_{1,\infty}(0,1)$ such that 
\[
 \|g_{j(n-1)}\|_{1,\infty} \le 1, \ \ \ \  n\in \Bbb N, \ \ j=1,\dots,n,
\]
and  for sufficiently large  $n\in \Bbb N$ and every choice of signs $\eta_j=\pm 1,\, j=1,\dots,n$,
\[
\frac12  n\log{n} \le \Big\|\sum_{j=1}^{n-1} \eta_j g_{j(n-1)}\Big\|_{1,\infty} \le  n \log{n}.
\]
\end{lemma}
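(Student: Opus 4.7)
My plan is to exhibit the $g_{jn}$ as explicit functions on $(0,1)$, most naturally as scaled indicators or reciprocal-type functions of the form $g_{jn} = c_{jn}\chi_{A_{jn}}$ or $g_{jn}(x) = x^{-1}\chi_{A_{jn}}(x)$ on carefully chosen subsets $A_{jn} \subset (0,1)$. I would verify the individual bound $\|g_{jn}\|_{1,\infty} \le 1$ directly from $\sup_{t} t\, g_{jn}^*(t) \le 1$ using the decreasing rearrangement of the chosen function, which is routine once the support $A_{jn}$ has been fixed.

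For the upper bound $\|\sum_{j=1}^{n-1} \eta_j g_{j(n-1)}\|_{1,\infty} \le n \log n$, I would use the pointwise majoration
\[
\Bigl|\sum_{j=1}^{n-1} \eta_j g_{j(n-1)}(x)\Bigr| \le \sum_{j=1}^{n-1} |g_{j(n-1)}(x)|,
\]
which forces the distribution function of the signed sum to be dominated by that of the absolute sum, reducing the problem to evaluating $\|\sum_{j=1}^{n-1} |g_{j(n-1)}|\|_{1,\infty}$. Since $\sum|g_{j(n-1)}|$ is a nonnegative sum of $n-1$ harmonic-type pieces, its distribution function $d(\lambda)$ can be computed explicitly, and for a suitable overlap pattern of the $A_{jn}$ the supremum $\sup_\lambda \lambda d(\lambda)$ comes out to be of order precisely $n \log n$.

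The main obstacle will be the lower bound $\tfrac12 n \log n \le \|\sum_{j=1}^{n-1}\eta_j g_{j(n-1)}\|_{1,\infty}$, required to hold for \emph{every} sign pattern $\eta$. A disjoint-support design is ruled out at the outset: if the $g_j$ are disjointly supported then at each $x$ only one term is nonzero, hence $d_{\sum \eta_j g_j}(\lambda) = \sum_j d_{g_j}(\lambda)$, which yields $\|\sum \eta_j g_j\|_{1,\infty} \le \sum \|g_j\|_{1,\infty} \le n$, far below the target $\tfrac12 n \log n$. Therefore the supports of the $g_{jn}$ must overlap, and then the signs $\eta_j$ threaten to produce cancellations. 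The hard part will be to arrange the overlap structure of the sets $A_{jn}$ so that, for every $\eta$, one can still exhibit a level $\lambda \sim n$ and a set $E_\eta \subset (0,1)$ with $\lambda\,|E_\eta| \ge \tfrac12 n \log n$ on which $|\sum_{j=1}^{n-1} \eta_j g_{j(n-1)}| \ge \lambda$; making this lower bound robust against the adversarial choice of $\eta$ is the heart of the proof, and will likely require that the $A_{jn}$ possess a rigidity (for instance a nested or carefully shifted interval structure) strong enough to guarantee that the contributions which survive the signs still dominate the distribution function at the correct scale.
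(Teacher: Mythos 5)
Your plan correctly identifies the easier ingredients --- reciprocal-type building blocks with $\|g_{jn}\|_{1,\infty}\le 1$, and the upper bound via the pointwise domination $\bigl|\sum_j\eta_j g_{j(n-1)}\bigr|\le\sum_j|g_{j(n-1)}|$ --- and your observation that disjointly supported $g_j$ can never beat $\sum_j\|g_j\|_{1,\infty}\le n$ is correct and does rule out the naive design. But the proposal stops exactly where the proof begins: you yourself call the sign-robust lower bound ``the heart of the proof'' and offer only the hope that some ``rigidity'' (nesting, shifting) of the supports will supply it. No single overlap pattern does; the mechanism in the paper is combinatorial rather than geometric. One partitions $(0,1)$ into $2^{n-1}$ scales $(n^{-m},n^{-m+1}]$, indexes these scales by \emph{all} $2^{n-1}$ sign vectors $\varepsilon^m\in\{\pm1\}^{n-1}$, and at scale $m$ places $n-1$ cyclic shifts $f_{m1},\dots,f_{m,n-1}$ of a $1/t$-profile whose in-phase sum $F_m=\sum_j f_{mj}$ is $\approx n^m\log n$ pointwise there (a Riemann sum for $\int dx/x$). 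Setting $g_{j(n-1)}=\sum_m\varepsilon^m_j f_{mj}$, an adversarial $\eta$ turns the coefficient of $f_{mj}$ into $\eta_j\varepsilon^m_j$; since every sign vector occurs as some $\varepsilon^{m_0}$, taking $\varepsilon^{m_0}=\eta$ makes all these products $+1$ at scale $m_0$, where the sum is exactly $F_{m_0}$ and hence has quasi-norm $\ge\frac12 n\log n$. Whatever cancellation $\eta$ causes at the other scales is harmless for a lower bound because the scales are disjoint. This ``exhaust all sign patterns, one per scale'' device is the missing idea; without it your outline cannot be completed.

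A smaller but concrete flaw: you propose to witness the lower bound with a level $\lambda\sim n$ and a set $E_\eta$ satisfying $\lambda\,|E_\eta|\ge\tfrac12 n\log n$. Since $E_\eta\subset(0,1)$ forces $|E_\eta|\le 1$, this would require $\lambda\ge\tfrac12 n\log n$, so the witnessing level cannot be of order $n$; in the paper it is of order $n^{m_0}\log n$ on a set of measure $\approx n^{-(m_0-1)}$. The part of your plan concerning the individual norms is fine: each $|g_{j(n-1)}|=\sum_m f_{mj}$ is a disjoint union across scales whose decreasing rearrangement is $\frac{1}{t+n^{-2^{n-1}}}$ on $\bigl(0,1-n^{-2^{n-1}}\bigr]$, giving $\|g_{j(n-1)}\|_{1,\infty}\le 1$ as you anticipated.
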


\begin{proof} Let $k,n\in\Bbb N$ and $i=1,\dots, n-1$. Define for $t\in (0,1)$,
\[
f_{ki}(t) = \frac{1}{t+\frac{1}{n^{k-1}} - \frac{i}{n^k}}\chi_{\left(\frac{1}{n^k}, \frac{i}{n^k}\right]}(t)
+ \frac{1}{t-\frac{i-1}{n^k}} \chi_{\left(\frac{i}{n^k}, \frac{1}{n^{k-1}}\right]}(t).
\]
Setting
\[
F_k = \sum_{j=1}^{n-1} f_{kj},
\]
we have
\[
F_k(t) = \sum_{i=1}^{n-1}\Big(\sum_{j=1}^{n-1} \frac{1}{t+\frac{j-i}{n^k}}\Big)
\chi_{\left(\frac{i}{n^k}, \frac{i+1}{n^k}\right]}(t), \ \ \ t\in (0,1).
\]
We will show that for all $k\in\Bbb N$ and all $n\ge 10$,
\[
\frac12 n\log{n} \le \|F_k\|_{1,\infty} \le n\log{n}.
\]
In fact, if $\frac{i}{n^k} < t \le \frac{i+1}{n^k},\, i=1,\dots, n-1$, then
\[
n^k(\log{n} -1) \le F_k(t) = \sum_{j=1}^{n-1} \frac{1}{t+\frac{j-i}{n^k}} \le n^k \log{n}.
\]
Hence for all $0<t\le \frac{1}{n^{k-1}}\left(1-\frac{1}{n}\right)$,
\[
n^k(\log{n}-1) \le F_k^*(t) \le n^k \log{n},
\]
and so 
\[
\|F_k\|_{1,\infty} \le \frac{1}{n^{k-1}} \left(1-\frac{1}{n}\right) n^k \log{n} \le n\log{n}
\]
and for all $n\ge 10$,
\[
\|F_k\|_{1,\infty} \ge \frac{1}{n^{k-1}} \left(1-\frac{1}{n}\right) n^k(\log{n} -1) \ge \frac12 n\log{n}.
\] 

\par Let $m=1,\dots, 2^{n-1}$ and $\varepsilon^m = (\varepsilon_1^m,\dots,\varepsilon^m_{n-1})$ be
a sequence of signs $\varepsilon^m_j = \pm 1,\, j=1,\dots,n-1$. We assume here that $\varepsilon^{m_1}\ne
\varepsilon^{m_2}$ if $m_1\ne m_2$. Define now the following functions
\[
G_m = \sum_{j=1}^{n-1} \varepsilon_j^m f_{mj}.
\]
Notice that the supports of $G_m$ are disjoint and that $G_{m_0} = F_{m_0}$ whenever $\varepsilon_j^{m_0} =1$
for $j=1,\dots, n-1$. Therefore
\[
\Big\| \sum_{m=1}^{2^{n-1}} G_m\Big\|_{1,\infty} \ge \|G_{m_0}\|_{1,\infty} =
\|F_{m_0}\|_{1,\infty} \ge \frac12 n\log{n}.
\]
On the other hand observe that
\[
\sum_{m=1}^{2^{n-1}} F_m \le \sum_{m=1}^{2^{n-1}} n^m \log{n}\chi_{\left(\frac{1}{n^m}, \frac{1}{n^{m-1}}\right]}.
\]
Then
\[
\Big(\sum_{m=1}^{2^{n-1}} F_m\Big)^* \le \sum_{m=1}^{2^{n-1}} n^m\log{n}
\chi_{\left(\frac{1}{n^m} - \frac{1}{n^{2^{n-1}}}, \frac{1}{n^{m-1}} - \frac{1}{2^{2^{n-1}}}\right]},
\]
and so
\[
\Big\|\sum_{m=1}^{2^{n-1}} F_m \Big\|_{1,\infty} \le \max_{m=1,\dots,2^{n-1}}
\sup_{t\in\left(n^{-m} - n^{-2^{n-1}}, n^{-m+1} - 2^{-2^{n-1}}\right]}
tn^m\log{n} \le n\log{n}.
\]
Hence
\[
\Big\| \sum_{m=1}^{2^{n-1}} G_m \Big\|_{1,\infty} \le \Big\|\sum_{m=1}^{2^{n-1}} F_m\Big\|_{1,\infty}
\le n\log{n}.
\]
Now, let for $j=1,\dots,n-1$, $n\in \mathbb{N}$,
\[
g_{j(n-1)} = \sum_{m=1}^{2^{n-1}} \varepsilon^m_j f_{mj}.
\]
For any $\eta_j=\pm 1,\, j=1,\dots,n-1$, we have
\[
\sum_{j=1}^{n-1} \eta_j g_{j(n-1)} = \sum_{j=1}^{n-1}\eta_j\Big(\sum_{m=1}^{2^{n-1}} \varepsilon^m_j f_{mj}\Big)=
\sum_{m=1}^{2^{n-1}}\Big(\sum_{j=1}^{n-1} \eta_j \varepsilon_j^m f_{mj}\Big).
\]
Setting now $\alpha_j^m = \eta_j\varepsilon_j^m,\, m=1,\dots, 2^{n-1},\, j=1,\dots, n-1$, we get
\[
\sum_{j=1}^{n-1} \eta_j g_{j(n-1)} = \sum_{m=1}^{2^{n-1}}\Big(\sum_{j=1}^{n-1} \alpha_j^m f_{mj}\Big)
=
\sum_{m=1}^{2^{n-1}} G_m.
\]
Hence by the previous inequalities, for every choice of signs $\eta_j=\pm 1$ and for sufficiently large $n$,
we have
\begin{equation}\label{eq:11}
\frac12 n\log{n} \le \Big\|\sum_{j=1}^{n-1} \eta_j g_{j(n-1)} \Big\|_{1,\infty} \le n\log{n}.
\end{equation}
The functions $f_{mj}$ have disjoint supports with respect to $m = 1,\dots,2^{n-1}$ for each $j=1,\dots,n-1$. Hence
\[
|g_{j(n-1)}| = \sum_{m=1}^{2^{n-1}} f_{mj}\ \ \ \text{and}\ \ \ \ \supp{|g_{j(n-1)}|}= (n^{-2^{n-1}}, 1].
\]
It follows in view of the construction of the sequence $(f_{mj})$ that for $j=1,\dots,n-1$, $t\in(0,1)$ and $n\in \mathbb{N}$ we have
\[
g_{j(n-1)}^*(t) = \frac{1}{t+n^{-2^{n-1}}}\chi_{\left(0, 1- n^{-2^{n-1}}\right]}(t).
\]
Hence for all $j=1,\dots, n-1$ and $n\in \mathbb{N}$,
\begin{equation}\label{eq:12}
\|g_{j(n-1)}\|_{1,\infty}= \sup_{t\in (0,1)}\, t\, g_{j(n-1)}^*(t) = \frac{t}{t+n^{-2^{n-1}}}\chi_{\left(0, 1- n^{-2^{n-1}}\right]}(t)\le 1.
\end{equation}

In view of (\ref{eq:11}) and (\ref{eq:12}) the proof is completed.

\end{proof}

\begin{remark}\label{R:1} Lemma \ref{L:type1} remains also true for the sequence space $\ell_{1,\infty}$.
\end{remark}

\begin{theorem} \label{T:type1-weakL1}
The spaces $L_{1,\infty}(I)$ and $\ell_{1,\infty}$ do not have type $1$.  In particular,
$L_{1,\infty}(I)$ and $\ell_{1,\infty}$ are not normable.
\end{theorem}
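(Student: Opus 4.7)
The plan is to apply Lemma \ref{L:type1} directly to the definition of type $1$, using the fact that the Rademacher functions $r_k(t)$ take values $\pm 1$ almost everywhere. Specifically, I would fix $n\in\Bbb N$ large, take the vectors $x_j = g_{j(n-1)}$ for $j=1,\dots,n-1$, and examine the type $1$ inequality for this particular choice.

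For any $t\in(0,1)$, the sequence $(r_1(t),\dots,r_{n-1}(t))$ is a sign vector $(\eta_1,\dots,\eta_{n-1})$ with $\eta_j=\pm 1$, so Lemma \ref{L:type1} yields
\[
\Big\| \sum_{j=1}^{n-1} r_j(t)\, g_{j(n-1)} \Big\|_{1,\infty} \ge \tfrac{1}{2}\, n \log n
\]
for all sufficiently large $n$, uniformly in $t$. Integrating over $[0,1]$ gives the same lower bound on $\int_0^1 \|\sum_j r_j(t) g_{j(n-1)}\|_{1,\infty}\,dt$. On the other hand, the lemma also gives $\|g_{j(n-1)}\|_{1,\infty}\le 1$, hence $\sum_{j=1}^{n-1}\|g_{j(n-1)}\|_{1,\infty} \le n-1$. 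If $L_{1,\infty}(0,1)$ had type $1$ with constant $K$, comparing the two sides would force $\tfrac12 n\log n \le K(n-1)$ for all large $n$, which is absurd as $n\to\infty$. This shows $L_{1,\infty}(0,1)$ fails type $1$; for $L_{1,\infty}(0,\infty)$ one embeds $L_{1,\infty}(0,1)$ isometrically by extending functions by zero, and for $\ell_{1,\infty}$ the same argument works via Remark \ref{R:1}.

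For the "in particular" statement, I would argue that any Banach space trivially has type $1$ with constant $K=1$: since $|r_k(t)|=1$ a.e., the triangle inequality yields $\|\sum_k r_k(t) x_k\| \le \sum_k \|x_k\|$ pointwise, and integrating preserves this. Thus, if $L_{1,\infty}(I)$ or $\ell_{1,\infty}$ were normable (i.e.\ admitted an equivalent norm), they would automatically have type $1$, contradicting what was just proved.

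The only real work is step one — checking that the uniform-in-$t$ lower bound from Lemma \ref{L:type1} really does apply to $r_j(t)$ simultaneously, but this is immediate since $r_j(t)\in\{-1,+1\}$ for almost every $t$ and the lemma's lower bound holds for \emph{every} choice of signs. After that, the type $1$ inequality collapses to the comparison $\tfrac12 n\log n \lesssim K n$, whose failure as $n\to\infty$ is the heart of the argument. No delicate obstacle remains; the entire proof is essentially a two-line deduction from the lemma.
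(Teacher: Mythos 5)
Your argument is correct and is essentially the paper's own proof: the paper likewise rewrites the Rademacher integral as the average over all $2^n$ sign choices, applies the lemma's uniform lower bound $\tfrac12(n+1)\log(n+1)$ and the upper bound $\sum_j\|g_{jn}\|_{1,\infty}\le n$, and concludes that the type~$1$ ratio tends to infinity, with normability failing because a normed space trivially has type~$1$. The only (harmless) additions on your side are the explicit extension-by-zero remark for $I=(0,\infty)$ and the spelled-out triangle-inequality argument for the ``in particular'' clause.
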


\begin{proof}
Applying Lemma \ref{L:type1} we get
\[
\frac{\int_0^1\Big\|\sum_{j=1}^{n} r_j(t) g_{jn}\Big\|_{1,\infty} dt}
{\sum_{j=1}^{n}\|g_{jn}\|_{1,\infty}} = \frac{2^{-n} \sum_{\eta_j=\pm 1}
\Big\|\sum_{j=1}^{n} \eta_j g_{jn}\Big\|_{1,\infty}}{\sum_{j=1}^{n} \|g_{jn}\|_{1,\infty}}
\ge \frac{(n+1)\log{(n+1)}}{2n} \rightarrow \infty,
\]
as $n\rightarrow\infty$, which shows that the space $L_{1,\infty}(I)$ does not have type $1$. By Remark \ref{R:1} the proof also holds for sequence case.
\end{proof}

\bigskip


\begin{thebibliography}{9}

\bibitem{BS}
C. Bennett and R Sharpley,
\emph{Interpolation of Operators}, Academic Press, 1988.

\bibitem{CRS}
M. J. Carro, J. A. Raposo and J. Soria, \emph{Recent Developments in the Theory of Lorentz Spaces and
Weighted Inequalities}, Mem. Amer. Math. Soc. \textbf{187}~(2007).


\bibitem{Kal1}
N. J. Kalton,
\emph{Convexity, type and the three space problem},
Studia Math. \textbf{59}~1981, 247--287.


\bibitem{KalKam}
N. J. Kalton and A. Kami\'nska, \emph{Type and order convexity of Marcinkiewicz and Lorentz spaces and applications},
Glasgow Math. J. \textbf{47}~(2005) 123--137.

\bibitem{KPR}
N. J. Kalton, N. T. Peck and J. W. Roberts,
\emph{ An F-space Sampler},
Cambridge University Press, Lect. Notes Series, Vol. \textbf{89}~(1984).



\bibitem{KPS}
S. G. Krein, Ju. I. Petunin and E. M. Semenov,
\emph{Interpolation of Linear Operators}, AMS
Translations of Math. Monog. \textbf{54}~(1982).

\bibitem{LT}
J. Lindenstrauss and L. Tzafriri, \emph{Classical Banach
Spaces II}, Springer-Verlag, 1979.



\end{thebibliography}
\end{document}